\newtheorem{theorem}{Theorem}
\newtheorem{prop}{Proposition}
\newtheorem{corollary}{Corollary}
\newtheorem*{conjecture}{Conjecture}
\title{The Lower Bound for Number of Hexagons in Strongly Regular Graphs with Parameters $\lambda=1$ and $\mu=2$}
\author{Reimbay Reimbayev}
\date{} % Activate to display a given date or no date (if empty),
\begin{document}
\maketitle

\begin{abstract}
The existence of $srg(99,14,1,2)$ has been a question of interest for several decades to the moment. In this paper we consider the structural properties in general for the family of strongly regular graphs with parameters $\lambda =1$ and $\mu =2$. In particular, we establish the lower bound for the number of hexagons and, by doing that, we show the connection between the existence of the aforementioned graph and the number of its hexagons.
\end{abstract}

\section{Introduction}

The existence of some graphs, most notably of those that have a very fine structure called strong regularity, is still not known  \cite{Gordon, Brouwer}. In his renowned set of five problems, John Conway \cite{Conway} stated a problem regarding the search for one of such graphs. It states the following.

\textbf{Problem:} Is there 99-vertex graph such that the following conditions are satisfied:
I. Any edge belongs to a unique triangle ($C_3$);
II. Any non-edge belongs to a unique quadrilateral ($C_4$)?

The problem is a rephrase of the search for a strongly-regular graph with parameters $n=99, k=14, \lambda = 1, \mu = 2, $ in short, $srg(99,14,1,2)$. Makhnev \cite{Makhnev} has answered this question partially. In this paper, we study the structure of such graphs in general and find the lower bound for number of hexagons; in doing so, we show that if the lower bound for hexagons achieved then the graph doesn't exist. Supporting this hypothesis is the fact that both of the known graphs of the same class, namely $srg(9,4,1,2)$ (i.e. Paley 9) and $srg(243, 22, 1,2)$, obey to it and take the lowest possible value for the number of hexagons.

Moreover, it does look like if the other two graphs in the same class with parameters $k=112$ and $k=994$ should exist have to be built of Paley 9, i.e. $srg(9,4,1,2)$, as building blocks. But without strict proves we can only speculate about it.

\section{Preliminary Study}

For simplicity, a graph, satisfying conditions I and II without regard to its order $n$,  henceforth be denoted $G$. First of all, let us show that the graph $G$ is indeed an srg. Obviously, $G$ is simple due to Condition I and with at least two vertices is connected due to Condition II. Also conditions guaranty that if $G$ is regular than it is strongly regular. Thus, we just need to prove the regularity.

\begin{prop}
Graph $G$  is regular, thus - strongly regular.
\end{prop}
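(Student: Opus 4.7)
The plan is to show that $d(v_1) = d(v_2)$ for every adjacent pair $v_1 \sim v_2$ by exhibiting an explicit bijection between their ``private'' neighborhoods; since $G$ is already observed to be connected, this will force regularity.

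First I would translate Conditions I and II into the numerical statements $|N(u) \cap N(v)| = 1$ when $u \sim v$, and $|N(u) \cap N(v)| = 2$ when $u \not\sim v$. The former is immediate from Condition I. For the latter, observe that every unordered pair of common neighbors $\{x, y\}$ of a non-edge $\{u, v\}$ produces a distinct $4$-cycle $u$-$x$-$v$-$y$-$u$, and every quadrilateral through this non-edge must have this shape (since $u \not\sim v$ forces them to be opposite vertices in the cycle). Uniqueness of the quadrilateral then gives $\binom{|N(u) \cap N(v)|}{2} = 1$, i.e.\ $\mu = 2$.

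Now fix an edge $v_1 v_2$ with unique triangle vertex $w$, and set $A = N(v_1) \setminus \{v_2, w\}$ and $B = N(v_2) \setminus \{v_1, w\}$, so $|A| = d(v_1) - 2$ and $|B| = d(v_2) - 2$. For any $a \in A$ I would first check that $a \not\sim v_2$ (else $a$ would be a second common neighbor of $v_1$ and $v_2$, violating $\lambda = 1$) and that $a \not\sim w$ (else $a$ would be a second common neighbor of $v_1$ and $w$ beyond $v_2$, violating $\lambda = 1$ on the edge $\{v_1, w\}$). Applying $\mu = 2$ to the non-edge $\{a, v_2\}$, whose common neighbors already include $v_1$, then gives exactly one other common neighbor $\phi(a) \in N(v_2) \setminus \{v_1\}$; the two non-adjacency observations rule out $\phi(a) = w$, so in fact $\phi(a) \in B$.

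Finally I would verify that the symmetric construction $\psi : B \to A$ inverts $\phi$: if $b = \phi(a)$ then $a$ is a common neighbor of $b$ and $v_1$ distinct from $v_2$, and applying $\mu = 2$ to the non-edge $\{b, v_1\}$ forces $\psi(b) = a$. Hence $\phi$ is a bijection, $d(v_1) = d(v_2)$, and connectedness of $G$ propagates equality of degrees to all of $V(G)$. The only real subtlety is the exclusion $\phi(a) \neq w$, which is not automatic from the definition and needs the auxiliary application of $\lambda = 1$ to the edge $\{v_1, w\}$; the remaining steps are routine.
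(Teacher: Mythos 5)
Your proof is correct and takes essentially the same route as the paper: both establish a bijection between $N(v_1)\setminus\{v_2,w\}$ and $N(v_2)\setminus\{v_1,w\}$ by sending each vertex to the unique second common neighbor supplied by Condition II, then propagate equality of degrees along paths using connectedness. Your write-up is merely more explicit about the small verifications (ruling out $\phi(a)=w$ and checking that the symmetric map inverts $\phi$), which the paper's argument contains implicitly.
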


\begin{proof}
We can safely assume that $G$ has at least two vertices or else, it does not have any edges or non-edges. Choose vertex $a \in V(G)$, $G$ is connected so there exists $b \in G$ s.t. $ab \in E(G)$. Condition I guarantees the existence of the unique $c$ - the third vertex of the triangle with vertices $a,b,c$. Denote $N(v)$ - the set of vertices adjacent to a given vertex $v$, its neighborhood excluding the vertex itself. If $N(a) \setminus \{a,c\}=N(b) \setminus \{ b,c\}= \varnothing$ then $G=K_3$ thus 2-regular and by default is an $srg(3,2,1,2)$.
Otherwise choose $v \in N(a)\setminus \{ b,c \}$. $vb \notin E(G)$ and $vc \notin E(G)$ due to Condition I. As $vb$ is a non-edge, Condition II identifies $w \in N(b)$ s.t. $vw \in E(G)$. $v$ is not adjacent to any other vertices from $N(b) \setminus \{ a,c\}$. Similarly $w$ cannot be adjacent to any other vertices from $N(a) \setminus \{ b,c\}$. Bijection has been established, which means the vertex degrees are equal, $d_a=d_b$, i.e. $a$ and $b$ are vertices of equal valencies. $G$ is connected, thus for any $x \in V(G)$ there is a path $a,y,...,x$, with $d_a=d_y=...=d_x$, so $d_a=d_x$. $G$ is regular.
\end{proof}

\begin{figure}
	\includegraphics[width=0.6\textwidth]{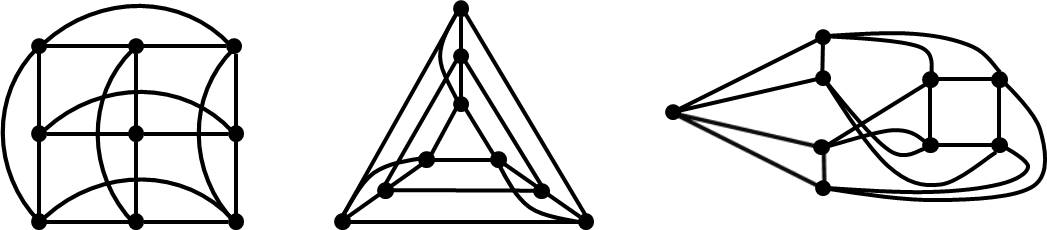}
		\centering
		\caption{Drawings of Paley 9 graph $P_9\equiv srg(9,4,1,2)$.}
		\label{fig1}%in order to refer to the figure use: In figure \ref{fig1}...
\end{figure}

As a result of the proposition, $G$ is strongly regular, $srg(n,k,\lambda,\mu)$, where $n$ -number of vertices, $k$- valency, that is necessarily even due to Condition I, $\lambda = 1$ (also Condition I), $\mu = 2$ (Condition II). Thus Conditions I and II define the class of strongly regular graphs with parameters $\lambda = 1$ and $\mu = 2$. The order of the graph $n$ and its valency $k$ are also related with a simple formula.

\begin{prop}
For a $k$-regular graph $G$ satisfying Conditions I and II, \[n=|V(G)|=\frac{k^2+2}{2}\].
\label{prop2}
\end{prop}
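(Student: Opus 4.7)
The plan is to use a standard double-counting argument on the edges between the neighborhood and the non-neighborhood of a fixed vertex, which is the classical identity relating the parameters of a strongly regular graph specialized to $\lambda=1$, $\mu=2$.

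First I would fix an arbitrary vertex $v \in V(G)$. By regularity, $|N(v)| = k$, and so the set of non-neighbors of $v$ (other than $v$ itself) has size $n - k - 1$. I would then count, in two ways, the number of edges joining $N(v)$ to $V(G) \setminus (N(v) \cup \{v\})$.

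For the first count, take any $u \in N(v)$. Its $k$ neighbors consist of: the vertex $v$; the $\lambda = 1$ vertices that form a triangle with $v$ and $u$ (guaranteed uniquely by Condition I); and the remaining $k - 2$ neighbors, none of which can lie in $N(v)$ or equal $v$, so all $k - 2$ lie among the non-neighbors of $v$. Summing over the $k$ neighbors of $v$ gives $k(k - 2)$ such edges. For the second count, take any non-neighbor $w$ of $v$. Since $vw$ is a non-edge, Condition II provides exactly $\mu = 2$ common neighbors of $v$ and $w$, i.e.\ exactly $2$ edges from $w$ to $N(v)$. Summing over the $n - k - 1$ non-neighbors gives $2(n - k - 1)$.

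Equating the two counts yields $k(k-2) = 2(n - k - 1)$, from which $n = (k^2 + 2)/2$ follows by direct algebraic manipulation. There is no real obstacle here; the only thing to be careful about is justifying that the $k - 2$ ``other'' neighbors of $u$ in the first count are indeed all non-neighbors of $v$, which uses Condition I (uniqueness of the triangle) to rule out any additional edges from $u$ back into $N(v)$.
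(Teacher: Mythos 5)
Your proof is correct and follows essentially the same double-counting argument as the paper: count edges between $N(v)$ and the non-neighbors of $v$, getting $k(k-2)$ via regularity and Condition I, and $2(n-k-1)$ via Condition II, then solve for $n$. You in fact spell out the justification that the $k-2$ remaining neighbors of each $u \in N(v)$ lie outside $N(v) \cup \{v\}$ a bit more explicitly than the paper does.
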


\begin{proof}
We will use the standard technique in Graph Theory called Double Counting. Let $a \in V(G)$, $N(a)$- neighborhood of $a$, and $W(a)=V(G)\setminus N(a) \setminus \{a\}$, the set of vertices of $G$ different from $a$ and $N(a)$.
So we have: $n=|V(G)|$, $|N(a)|=k$, $|W(a)|=n-k-1$. Consider all the edges between $N(a)$ and $W(a)$. Then,
\[k(k-2)=2(n-k-1).\]
Left-hand side is due to regularity and Condition I; the right-hand side- due to Condition II. Solving the equation, we get $n=\frac{k^2+2}{2}$.
\end{proof}

Denote $p_3, p_4, p_5$ and $p_6$ the number of, respectively, triangles (induced subgraphs isomorphic to cycle $C_3$), quadrilaterals ($C_4$), pentagons ($C_5$) and hexagons ($C_6$). The next few proposition are about the number of such polygons (cycles) in $G$. The quantities $p_3$, and $p_4$ can be found directly.

\begin{prop}
Graph $G$ has exactly $\frac{1}{6}nk$ triangles and $\frac{1}{8}nk(k-2)$ quadrilaterals.
\end{prop}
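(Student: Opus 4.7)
The proof will be a pair of double counts, one for each quantity, using the two defining conditions to make each count line up cleanly.

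For the triangles, the plan is to count pairs $(e,T)$ where $e$ is an edge of $G$ and $T$ is a triangle containing $e$. Since $G$ is $k$-regular on $n$ vertices, there are $|E(G)| = nk/2$ edges. Condition I says each edge lies in exactly one triangle, so the number of pairs is $nk/2$. On the other side, each triangle has exactly $3$ edges, so the number of pairs is $3p_3$. Setting these equal gives $p_3 = nk/6$ immediately.

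For the quadrilaterals, I would count pairs $(f,Q)$ where $f$ is a non-edge and $Q$ is an induced $C_4$ having $f$ as one of its two diagonals. The number of non-edges is $\binom{n}{2} - nk/2 = n(n-k-1)/2$, and Condition II says each non-edge sits in exactly one quadrilateral, so the pair count is $n(n-k-1)/2$. The slightly delicate step is on the other side: I need to show that an induced $C_4$ really has exactly $2$ non-edges, i.e., that both diagonals are non-edges. If the non-edge $\{a,b\}$ has common neighbors $\{u,v\}$, I would argue by contradiction: were $\{u,v\}$ an edge, then $a$ and $b$ would both lie in $N(u)\cap N(v)$, contradicting $\lambda=1$, which forces $|N(u)\cap N(v)|=1$. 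Hence the induced subgraph on $\{a,b,u,v\}$ is a genuine $C_4$ with diagonals $\{a,b\}$ and $\{u,v\}$, both non-edges. Thus each induced $C_4$ contributes exactly $2$ to the pair count, giving $2p_4 = n(n-k-1)/2$, i.e., $p_4 = n(n-k-1)/4$.

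Finally, I would substitute the identity $n - k - 1 = (k^2+2)/2 - k - 1 = k(k-2)/2$ from Proposition \ref{prop2} to rewrite $p_4 = nk(k-2)/8$, matching the stated formula. The only step that requires real thought is the diagonal argument for $C_4$; the rest is mechanical double counting plus the formula from Proposition \ref{prop2}.
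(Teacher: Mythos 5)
Your proof is correct and takes essentially the same approach as the paper: the triangle count via edge--triangle incidences and Condition I is identical, and the quadrilateral count is the same double count of non-adjacent pairs against quadrilaterals, finished with Proposition~\ref{prop2}. The only differences are cosmetic --- the paper counts vertex/non-neighbor incidences and divides by $4$ where you count non-edges and divide by $2$ --- plus your added (correct) verification via $\lambda=1$ that the quadrilateral is induced, a point the paper leaves implicit.
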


\begin{proof}
Straight-forward counting using Condition I and Handshaking Lemma gives:
\[p_3 =\frac{\frac{nk}{2}}{3}=\frac{nk}{6}.\]
To count the number of quadrilateral we will use the fact that each node of $G$ has $n-k-1$ nodes (vertices) non-adjacent with it. Condition II guarantees exactly one quadrilateral for each of them. Counting over all the vertices and dividing to four, because we count each quadrilateral exactly four times, we obtain: 
\[p_4 = \frac{n(n-k-1)}{4}=\frac{1}{4}n(\frac{k^2+2}{2}-k-1)=\frac{1}{8}nk(k-2).\]
Notice that we have used Proposition \ref{prop2} here.
\end{proof}

To find $p_5$ we need to work a bit harder.

\begin{theorem}
Graph G has exactly $\frac{1}{5}nk(k-2)(k-4)$ pentagons.
\end{theorem}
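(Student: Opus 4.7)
The plan is to count closed walks of length $5$ in $G$ in two different ways and isolate the contribution from induced pentagons. Algebraically, the defining identity $A^2 = (k-2)I - A + 2J$ (which encodes $\lambda=1,\mu=2$) can be iterated to express $A^5$ in the form $\alpha I + \beta A + \gamma J$, and taking the trace yields
\[ \operatorname{tr}(A^5) \;=\; nk(2k^2 - 2k + 1). \]
Combinatorially, I would classify each closed walk $(v_0,\dots,v_4,v_0)$ by the partition of the index set $\{0,1,2,3,4\}$ induced by the equalities $v_i = v_j$. Since $G$ is simple no two cyclically adjacent indices may merge, and since $\alpha(C_5) = 2$ only three partition types survive: (I) all five singletons, (II) exactly one non-adjacent pair merged (five positions), and (III) two disjoint non-adjacent pairs merged (five configurations).

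Type III walks visit exactly three vertices which must form a triangle (no closed odd walk fits on two vertices), and each triangle of $G$ contributes $\operatorname{tr}(A_{K_3}^5) = 30$ walks, totalling $30 p_3 = 5nk$. For a Type II walk with, say, $v_1 = v_3$, the condition $v_4 \in N(v_0) \cap N(v_1)$ together with $\lambda = 1$ forces $v_4$ to be the unique triangle-mate of $v_0 v_1$, while $v_2 \in N(v_1) \setminus \{v_0, v_4\}$ provides $k - 2$ options; each of the five positions therefore contributes $nk(k-2)$. Subtracting these from $\operatorname{tr}(A^5)$ isolates the Type I count $nk(k-2)(2k - 3)$, which equals $10$ times the number of (unordered) $5$-cycles in $G$, induced or not.

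The main obstacle is separating induced pentagons from $5$-cycles that carry a chord $v_i v_{i+2}$. The key lemma I would prove is that \emph{no $5$-cycle in $G$ contains more than one chord}: given two chords, one checks that either their induced triangles share a cycle edge, or the chords share a cycle vertex and, together with a cycle edge, form an extra triangle sharing an edge with one of the chord-triangles. In every configuration some edge of $G$ is forced into two distinct triangles, contradicting $\lambda = 1$. Granted the lemma, chorded ordered $5$-cycles with chord $v_0 v_2$ are counted by fixing $v_2 = t_{01}$, choosing $v_3 \in N(v_2) \setminus \{v_0, v_1\}$ ($k - 2$ options, each automatically satisfying $v_3 \not\sim v_0, v_1$ by $\lambda = 1$), and applying $\mu = 2$ to single out the unique admissible $v_4$; this gives $nk(k-2)$ per chord position, hence $5nk(k-2)$ in total, with no overcount by the lemma.

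Combining, the number of ordered induced pentagons is
\[ nk(k-2)(2k-3) \;-\; 5nk(k-2) \;=\; 2nk(k-2)(k-4), \]
and dividing by the $10$ rotational and reflectional symmetries of a pentagon yields $p_5 = \tfrac{1}{5} nk(k-2)(k-4)$. The algebraic expansion of $A^5$ and the trace computation are routine; the real work lies in the chord-uniqueness lemma and the careful bookkeeping of the $\lambda,\mu$ identities when counting Type II walks and chord extensions.
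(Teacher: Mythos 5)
Your argument is correct, and I checked the key numbers: the reduction $A^2=(k-2)I-A+2J$ does give $\operatorname{tr}(A^5)=nk(2k^2-2k+1)$; the degenerate closed $5$-walks do split into exactly your three types, contributing $5nk$ (two merged diagonal pairs, one triangle per walk, $30$ walks per triangle) and $5nk(k-2)$ (one merged pair, the triangle-mate forced by $\lambda=1$); your chord-uniqueness lemma holds, since any two chords of a $5$-cycle (sharing a vertex or not) force some edge into two triangles; and the per-position count $nk(k-2)$ of chorded ordered $5$-cycles is a genuine bijection, so $p_5=\frac{1}{10}\bigl(nk(k-2)(2k-3)-5nk(k-2)\bigr)=\frac{1}{5}nk(k-2)(k-4)$ as claimed. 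The route, however, is genuinely different from the paper's. The paper never touches the adjacency algebra: it counts only the closed $5$-walks whose distance code to the starting vertex is $0\,1\,2\,2\,1\,0$, obtaining $2nk(k-2)^2$ such walks directly from Conditions I and II, and then decomposes this as $10p_5+6t_1+2t_2$, where the non-pentagon configurations are handled by the identities $t_1=4p_4$ (a quadrilateral plus the unique triangle on one of its edges --- this is exactly the content of your chord-uniqueness lemma in disguise) and $t_2=3(k-2)p_3$. The paper's method is more elementary and shorter, since restricting to the code $0\,1\,2\,2\,1\,0$ excludes the degenerate walks you must classify and subtract; your method is more systematic and self-contained on the algebraic side (the full trace computation is mechanical once regularity is known), and it isolates the structural fact --- no $5$-cycle of $G$ carries two chords --- as an explicit lemma rather than an implicit step, which is arguably a cleaner justification of the non-induced-cycle count than the paper's terse coefficient bookkeeping. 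Both arguments rely on the regularity established in Proposition 1 (yours through $A\mathbf{1}=k\mathbf{1}$ in the matrix identity, the paper's through the counts $p_3$, $p_4$).
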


\begin{proof}
Any closed walk of length 5 in the graph can be coded into a string of six numbers $d_1 d_2 d_3 d_4 d_5 d_1$, which we denote here the distance between a vertex in a walk to its starting, and thus finishing, vertex. Pentagons will be coded by the string $0 1 2 2 1 0$. Using the geometry of the graph, the first vertex can be chosen $n$ ways, the second - $k$ ways, the next three respectively $k-2, k-2, 2$ ways and the last vertex, being already predetermined, - one way. In total, there are exactly $n\cdot k\cdot (k-2)\cdot (k-2)\cdot 2 \cdot 1=2nk(k-2)^2$ of such walks. Except of pentagons, two more possible configurations, $T_1$ and $T_2$, satisfy the same code (Figure \ref{fig2}).

\begin{figure}
	\includegraphics[width=1.0\textwidth]{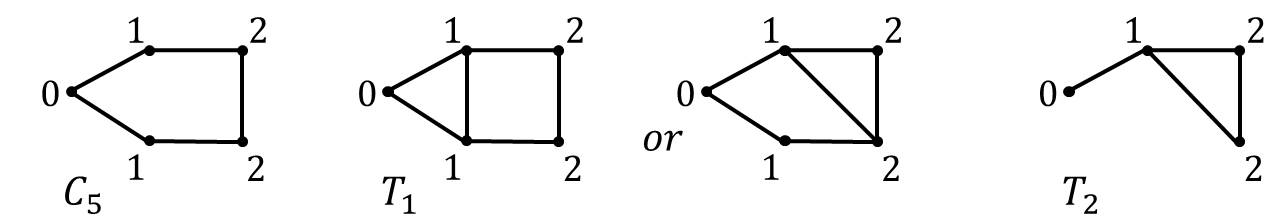}
		\centering
		\caption{Walks coded by string $0 1 2 2 1 0$ and their induced subgraphs.}
		\label{fig2}
\end{figure}

Thus, denoting $t_1$ the number of subgraphs of type $T_1$, and $t_2$ - of type $T_2$, we have
\[
	2nk(k-2)^2=10\cdot p_5+6 t_1+2 t_2,
\]
where $t_1=4 \cdot p_4 $; and $t_2=3(k-2) \cdot p_3$. The coefficients in front of $t_1$ and $t_2$ are coming from the symmetries of the walks. 
 
So,
\begin{align*}
   10\cdot p_5 &=2nk(k-2)^2-6t_1-2t_2\\
                                          &=2nk(k-2)^2-6\cdot 4\cdot \frac{1}{4}n(n-k-1)-2\cdot 3(k-2)\frac{nk}{6}\\
                                          &=2nk(k-2)^2-6n(\frac{k^2+2}{2}-k-1)-nk(k-2)\\
                                          &=2nk(k-2)^2-3nk(k-2)-nk(k-2)\\
                                          &=2nk(k-2)(k-4).
\end{align*}

Notice that we have used Proposition \ref{prop2} in calculations. The statement follows.

\end{proof}

The next statement follows immediately.

\begin{corollary}
An edge of $G$ belongs to exactly $2(k-2)(k-4)$ pentagons.
\label{corollary1}
\end{corollary}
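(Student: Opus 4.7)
The plan is a one-line double count on the incidences (pentagon, edge of that pentagon). Each induced $C_5$ contains exactly $5$ edges, so summing over all pentagons the total number of such pairs is $5p_5=nk(k-2)(k-4)$ by the preceding theorem. On the other hand, summing first over edges gives this same total as $\sum_{e}(\text{number of pentagons through } e)$. Since $|E(G)|=nk/2$ by the handshake lemma, dividing the total by the number of edges yields the average value $2(k-2)(k-4)$ pentagons per edge, which is exactly the number in the claim; the corollary is therefore equivalent to the assertion that every edge achieves this average.

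To secure the uniformity I would rerun the walk-counting argument from the proof of the theorem, but with a single ordered edge $(u,v)$ held fixed rather than summed over the $nk$ possibilities. The three choice counts used there -- $k-2$ choices for $w_3$, $k-2$ choices for $w_4$, and $\mu=2$ choices for $w_5$ -- depend only on the parameters $k$, $\lambda=1$, $\mu=2$, so the number of oriented closed walks $u,v,w_3,w_4,w_5,u$ with distance code $0,1,2,2,1,0$ equals $2(k-2)^2$ for every choice of ordered edge $(u,v)$. An analogous local count shows that the $T_1$- and $T_2$-walks beginning with $(u,v)$ occur in numbers determined solely by the parameters (for instance, the $T_2$-walks correspond to triangles at $v$ not containing $u$ or the triangle-vertex of $uv$, of which there are $k/2-1$, each producing $2$ walks). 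Subtracting from $2(k-2)^2$ the pentagon-walks starting with $(u,v)$ also come in a number that is the same for every edge, and because each pentagon through $\{u,v\}$ contributes exactly one such walk (the traversal $u\to v\to\cdots\to u$), the number of pentagons through any given edge is the same.

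The main obstacle is really this uniformity step; once it is in hand the corollary is immediate from $5p_5=|E(G)|\cdot 2(k-2)(k-4)$, and no further calculation is needed.
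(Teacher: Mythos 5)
Your argument is correct, and it takes a different route from the paper. The paper also starts from the average $p_5/|E(G)|=2(k-2)(k-4)$, but then shows directly that no edge can exceed this value (for $ab$ it counts choices $v\in N(a)$, $w\in N(b)$ outside the triangle with $v\not\sim w$ and at most $\mu=2$ middle vertices, minus degenerate routes), and concludes by the max-equals-average principle. You instead localize the walk-code decomposition of Theorem 1 to a fixed ordered edge $(u,v)$: there are $2(k-2)^2$ code-walks starting with $(u,v)$ (using $\lambda=1$, $\mu=2$ and diameter $2$), each pentagon through $\{u,v\}$ accounts for exactly one of them, and the degenerate walks occur in edge-independent numbers. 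Your $T_2$ count ($k/2-1$ triangles at $v$ avoiding $u$, two walks each, so $k-2$ walks) is right; the $T_1$ step, which you only assert, does need to be carried out, but it works: the starting ordered edge can sit in a $T_1$ (house) configuration in three ways (apex--base, far--far, far--base), and in each case the houses are in bijection with the $k-2$ quadrilaterals through an associated edge, giving $3(k-2)$ walks per ordered edge, with $\lambda=1$ ruling out the degenerate apices. Once this is done your method actually yields the exact per-edge count $2(k-2)^2-3(k-2)-(k-2)=2(k-2)(k-4)$ outright, so the averaging step (and hence the appeal to uniformity) is dispensable. In exchange for this extra case analysis you get a cleaner logical structure than the paper's max-versus-average argument, whose subtraction of the ``two existing routes'' is left rather terse.
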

\begin{proof}
On average, each edge belongs to $p_5/|E(G)|=2(k-2)(k-4)$ pentagons, where $|E(G)|$ - number of edges in $G$. So we just need to prove that this is the maximum number of pentagons possible for a given edge.

Given $ab\in E(G)$, each of $k-2$ vertices of $a$, out of triangle based on $ab$, is adjacent to exactly one vertex from neighborhood of $b$ and is not adjacent to exactly $k-3$. Remember Condition II. Now it gives us at most $2(k-2)(k-3)-2(k-2)=2(k-2)(k-4)$ pentagons, where subtraction needed due to the two existing routs to each choice of $k-2$ vertices. The statement follows.
\end{proof}

\section{Main result}

To find the number of hexagons we will compare two quantities: the coefficient $c_6$ of the characteristic polynomial of the adjacency matrix of $G$ against the number of all possible triples of edges in $G$, which is obviously $\binom{|E(G)|}{3}$.

To begin with, we have to remind ourselves some known fact from algebraic graph theory. Given a graph $G$, with its adjacency matrix $A=A(G)$ and characteristic polynomial $P_G(x)$, the coefficients of its characteristic polynomial are connected with the structure of the graph in the following manner:

\begin{equation}
	c_i=(-1)^i\sum_{|S|=i} det A(G[S]),
	\label{eq1}
\end{equation}
\noindent where
\[
	P_G(x)=det(\lambda I - A)=\sum_{i=1}^n (x-\lambda_i)=c_0x^n+c_1x^{n-1}+c_2x^{n-2}... ,
\]
\noindent and $A(G[S])$ is an adjacency matrix of an induced subgraph on the set of vertices $S$ (West, 2-nd ed., p.454 \cite{West}). Here $\lambda_i$-s are the eigenvalues of $A[G]$.

Instead of vertices, we can induce the subgraphs on the set of three edges, which might not always give a subgraph of order six. The next proposition asserts those cases.

\begin{prop}
Denote $e_4$ - number of edge triples that are based on at most four vertices of $G$, $e_5$ - number of edge triples that are based on exactly five vertices of $G$.
The following equalities hold:
\begin{align*}
e_4&=\frac{1}{6}nk(4k^2-9k+3);\\
e_5&=\frac{1}{8}nk(k-2)(k^3+k^2-8k+2).
\end{align*}
\end{prop}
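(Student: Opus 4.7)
The plan is to classify three-edge configurations by the number of distinct endpoints, count each isomorphism type, and sum. With three edges, the sum of endpoint-degrees is $6$, so if the edges are based on exactly $v$ vertices (each of which must have degree $\ge 1$), then $v\in\{3,4,5,6\}$. Here $e_4$ covers $v\in\{3,4\}$ and $e_5$ covers $v=5$.

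For $e_4$, the degree sequences compatible with $v\le 4$ are $(2,2,2)$ giving a triangle $C_3$, $(3,1,1,1)$ giving a star $K_{1,3}$, and $(2,2,1,1)$ giving a path $P_4$; no other simple-graph configurations arise. I would count each separately. Triangles contribute $p_3=\tfrac{nk}{6}$ from the earlier proposition. For stars: since in any triple $\{vx,vy,vz\}$ the center is uniquely the unique vertex of degree $3$, we pick the center ($n$ ways) and three of its neighbors ($\binom{k}{3}$ ways), giving $n\binom{k}{3}$. For paths, I would count ordered vertex sequences $v_1v_2v_3v_4$: $n$ choices for $v_2$, $k$ for $v_3\in N(v_2)$, $k-1$ each for $v_1\in N(v_2)\setminus\{v_3\}$ and $v_4\in N(v_3)\setminus\{v_2\}$, then subtract the sequences with $v_1=v_4$. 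Such coincidences correspond to $v_1$ being a common neighbor of $v_2$ and $v_3$, and since $v_2v_3\in E(G)$ with $\lambda=1$ there is exactly one such choice per edge $v_2v_3$. This yields $nk[(k-1)^2-1]=nk^2(k-2)$ sequences; dividing by $2$ for path reversal gives $\tfrac{1}{2}nk^2(k-2)$ paths. Summing the three contributions and factoring out $\tfrac{nk}{6}$ produces $\tfrac{1}{6}nk(4k^2-9k+3)$, which is the claimed $e_4$.

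For $e_5$, the only admissible degree sequence is $(2,1,1,1,1)$, which forces the configuration $P_3\sqcup K_2$ (a two-edge path vertex-disjoint from a single edge). I would enumerate by first picking the $P_3$ (center vertex plus an unordered pair of its neighbors: $n\binom{k}{2}$ ways) and then choosing an edge of $G$ disjoint from its three vertices. By inclusion--exclusion, the number of edges of $G$ meeting the vertex set $\{a,v,b\}$ of a $P_3$ with center $v$ and leaves $a,b$ equals
\[
3k-|\{va\}|-|\{vb\}|-[ab\in E(G)] = \begin{cases}3k-3 & \text{if }avb\text{ is a triangle},\\ 3k-2 & \text{otherwise}.\end{cases}
\]
The number of $P_3$s of the first type is $3p_3=\tfrac{nk}{2}$ (each triangle yields three $P_3$s, one per choice of center), and of the second type $n\binom{k}{2}-\tfrac{nk}{2}=\tfrac{nk(k-2)}{2}$. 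Thus
\[
e_5=\tfrac{nk}{2}\bigl(\tfrac{nk}{2}-(3k-3)\bigr)+\tfrac{nk(k-2)}{2}\bigl(\tfrac{nk}{2}-(3k-2)\bigr).
\]
Substituting $n=\tfrac{k^2+2}{2}$ from Proposition~\ref{prop2} and simplifying, the bracketed factor becomes $\tfrac{(k-2)(k^3+k^2-8k+2)}{2}$, yielding the claimed $\tfrac{1}{8}nk(k-2)(k^3+k^2-8k+2)$.

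The only real obstacle is bookkeeping: isolating the triangle-versus-non-triangle split in the $P_3$ count and correctly applying $\lambda=1$ in the $P_4$ count to remove the $v_1=v_4$ cases. Everything else reduces to straightforward polynomial manipulation using $n=(k^2+2)/2$.
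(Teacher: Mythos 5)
Your proposal is correct and follows essentially the same route as the paper: the same classification (triangle, star $K_{1,3}$, path $P_4$ for $e_4$; the forced $P_3\sqcup K_2$ configuration for $e_5$), the same use of $\lambda=1$ to remove the triangle coincidence in the $P_4$ count, and the same triangle/non-triangle split with edge counts $3k-3$ versus $3k-2$ for $e_5$, matching the paper's $\frac{nk}{2}((k-1)^2-1)$ and $n[\frac{k}{2}(\frac{nk}{2}-3k+3)+(\binom{k}{2}-\frac{k}{2})(\frac{nk}{2}-3k+2)]$ term by term. The only differences are cosmetic (ordered-sequence counting of paths, and a loosely worded ``bracketed factor'' in the final simplification), and both computations reduce to the stated formulas.
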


\begin{proof}
Three edges can be contained by three vertices if they are mutually incident and form a triangle in $p_3$ ways. Three edges can all be incident to exactly one vertex, and thus being contained by four vertices, - in $n\binom{k}{3}$ ways. And finally, three edges can be incident consequentially as in a path $P_4$: choose the middle edge arbitrarily from all possible edges; two adjacent ones - such that they do not form a triangle, - altogether, in $\frac{nk}{2}((k-1)^2-1)$ ways.
Collecting,
\[e_4=\frac{nk}{6}+n\binom{k}{3}+\frac{nk}{2}((k-1)^2-1)=\frac{1}{6}nk(4k^2-9k+3).
\]
Notice that different triples of edges can give the same induced subgraph, but it should not bother us at the moment as we are counting only distinct triples of edges, not subgraphs.

To find $e_5$, we have to realize that three edges can be incident to exactly five vertices only if two edges are incident while the third edge is not incident to the previous two. This fact means that among five vertices we always have one unique vertex with two edges incident to it. Choose that vertex, $n$ ways; next choose two incident to it edges out of $k$ possible. Here we have to consider two possibilities: when the pair of edges belong to a triangle, $\frac{k}{2}$ pairs, and when they do not, $\binom{k}{2}-\frac{k}{2}$ cases. For each possibility, we will choose the third edge out of all possible edges not incident to the ones already chosen.

Thus,
\begin{align*}
e_5&=n[ \frac{k}{2}(\frac{nk}{2}-3(k-2)-3)+(\binom{k}{2}-\frac{k}{2})(\frac{nk}{2}-(k-2)-2(k-1)-2)]\\
&=\frac{1}{2}nk(k-1)(\frac{nk}{2}-3k+2)+\frac{nk}{2}=\frac{1}{8}nk(k-2)(k^3+k^2-8k+2).
\end{align*}

\end{proof}

Now we turn our attention to the case when three edges incident to six vertices. It is the case when we have a perfect matching, or three-edge covers of the six vertex subgraphs. We have to consider all possible subgraphs on six vertices of $G$. They are given in the following two tables. Table 1 considers connected subgraphs and Table 2 - disconnected ones.

\begin{center}
\begin{tabular}{ |r|l|l|l|l|l|l|l|l|l|l|l|l|l|l|l|l|l|l|l|l|l|l|l|l|l|l|l|l|l|l|l|l|l|l|l|l|l|l|l|l| }
  \hline
  \multicolumn{12}{|l|}{Table 1: Connected subgraphs} \\
  \hline
 num.&1.   &2.    &3.    &4.    &       &       &5.   &       &6.   &       &7.   \\
 Cvet.  &51   &68   &70   &72   &79   &83   &84   &85   &86  &87   &88   \\
 det.    &0    &-4    &0     &-1    &-1   &-1    &3     & -1   &0   &-1    & 0   \\
 cov.    &4    &2     &2     &3     &1     &1     &1     &1     &2   &1     & 2    \\
  \hline
  \multicolumn{12}{|l|}{     } \\
  \hline
 num.&8.   &9.    &10.&     &       &       &       &       &      &       &       \\
 Cvet.  &89  &92   &93 &94  &95   &96   &97   &98   &99   &100 &101\\
 det.    &-4   &-1   &0   &0    &0    &-1     &-1    &-1   &0    &-1     & 0\\
 cov.   &2     &3    &2   &0    &0     &1     &1     &1     &0     &1     &0\\
  \hline
  \multicolumn{12}{|l|}{     } \\
  \hline
 num.&      &       &11.   &     & 12.  &     &     &       &       &       &       \\
 Cvet.  &102 &103 &104 &105 &106 &107&108&109 &110 &111  &112\\
 det.    &-1   &0     & 0    &-1   &-4    &0    &0    &0    &-1    &0     &-1\\
 cov.   &1     &0     & 2    &1     &2    &0    &0    &0     &1    &0      &1\\
  \hline   
\end{tabular}
\end{center}

Here, \emph{num.} - is the special numeration of the graphs that do not vanish when we add the bottom rows: $det.+cov.$; \emph{Cvet.} - the numeration of six-vertex graphs due to  Cvetcovic \cite{Cvetcovic}; \emph{det.} - is the determinant of the adjacency matrix of the given graph; \emph{cov.} - number of edge covers of the graph by exactly three edges. Here we have to use \cite{Cvetcovic} in order to make sure that we do not miss any graph. The paper gives a complete list of connected six-vertex graphs and allows us to refer to the graphs without necessarily drawing them. The values for the determinants have been found using online Matrix Calculator \cite{matrixCalc}. We will use them in summation for $c_6$ from formula (\ref{eq1}). Five more disconnected graphs on six vertices, that are not available from the list of Cvetcovic \cite{Cvetcovic}, are given in Table 2. 

\begin{center}
\begin{tabular}{ |r|l|l|l|}
  \hline
  \multicolumn{4}{|l|}{Table 2: Disconnected subgraphs} \\
  \hline
 num.&       graph                                                & det.    &cov. \\ \hline
          &\includegraphics[width=0.8in]{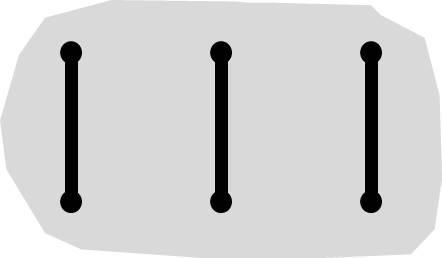}  &-1       &1  \\
          &\includegraphics[width=0.8in]{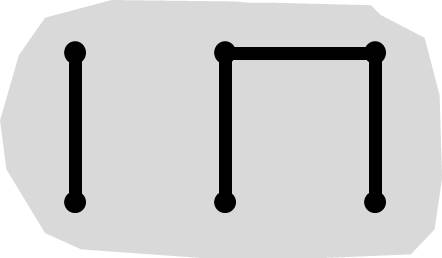}  &-1       &1    \\
 13.    &\includegraphics[width=0.8in]{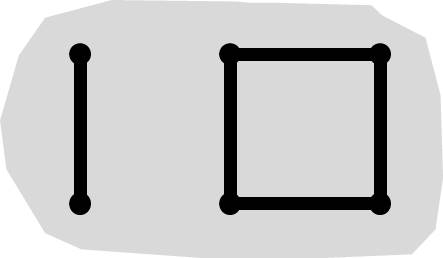}  &0        &2    \\
          &\includegraphics[width=0.8in]{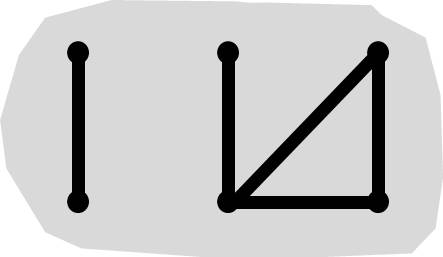}  &-1       &1    \\
 14.    &\includegraphics[width=0.8in]{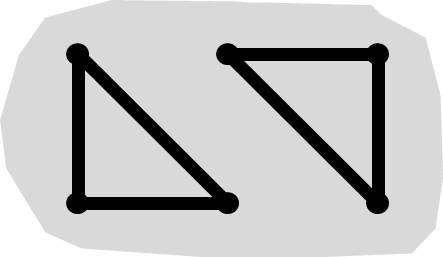}  &4         &0    \\
 \hline   
\end{tabular}
\end{center}

In short, adding two quantities, $c_6$ and  $\binom{|E(G)|}{3}$ would allow us to eliminate, as it can be seen from the tables, most of the graphs and leave only twelve of them. They are given in Figure \ref{fig6} with the same numeration as in tables. Denote $n_i$ - number of graphs isomorphic to the graph enumerated by  $i, 1\leq i \leq12$, from the Figure \ref{fig6}. Then,

\begin{align}
&c_6+\binom{|E(G)|}{3}=4n_1-2n_2+2n_3+2n_4+4n_5+2n_6+2n_7-2n_8+2n_9+\nonumber \\
&2n_{10}+2n_{11}-2n_{12}+2n_{13}+4n_{14}+e_4+e_5.
\label{eq2}
\end{align}

\begin{figure}
	\includegraphics[width=0.8\textwidth]{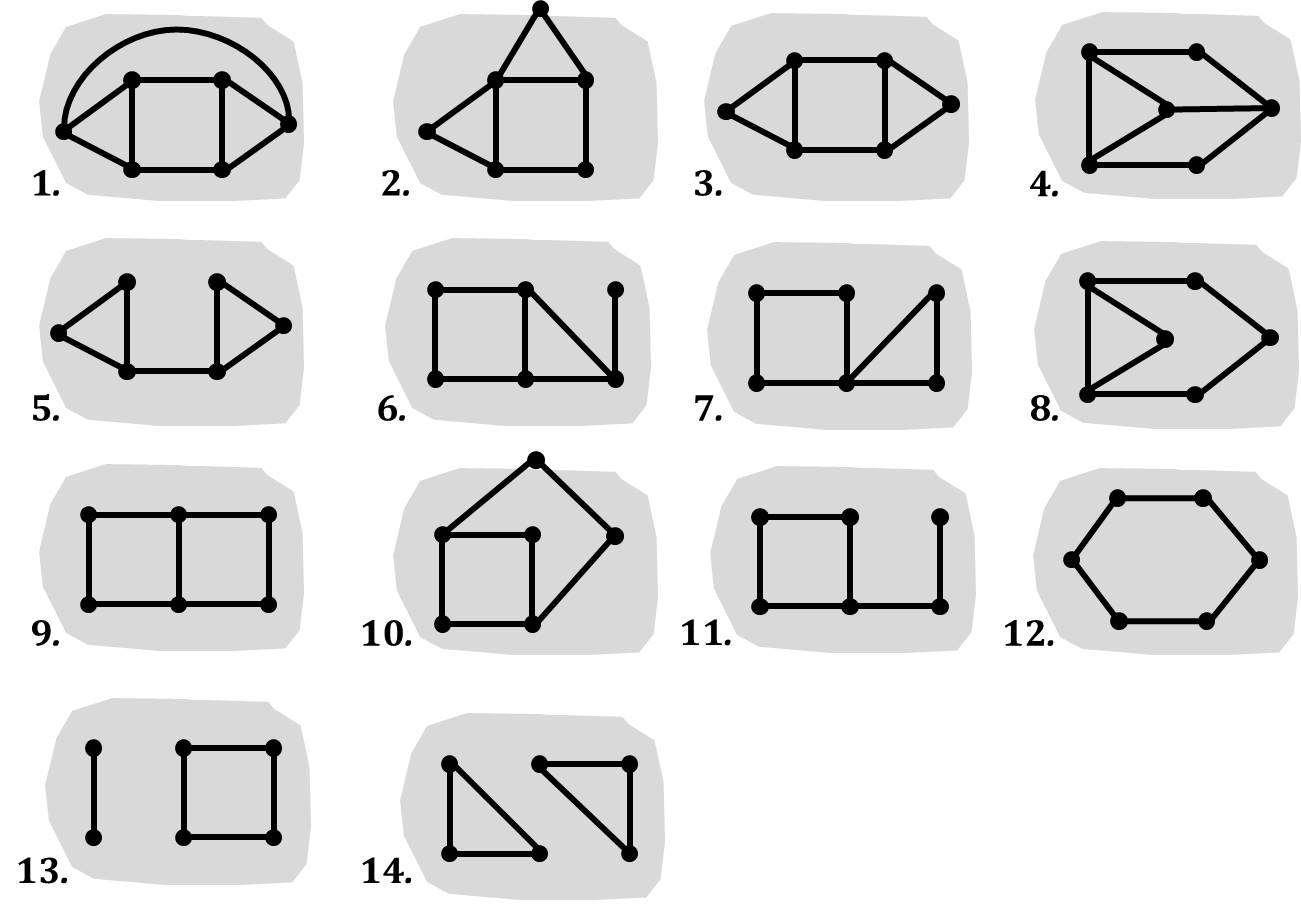}
		\centering
		\caption{The only induced subgraphs that have not been eliminated by summation $c_6+\binom{|E(G)|}{3}$. The enumeration is the same as in Table 1 and 2.}
		\label{fig6}
\end{figure}

To proceed further, we need the next rather lengthy proposition. It will allow us to tie up all the quantities on the right hand side of (\ref{eq2}).

\begin{prop}
The following equalities hold:
\begin{align}
n_2&=\frac{1}{2}nk(k-2);\\
n_4+n_8&=nk(k-2)(k-4);\\
6n_1+n_4&=\frac{1}{2}nk(k-2);\\
3n_1+n_3&=\frac{1}{4}nk(k-2);\\
3n_1+n_4+n_9&=\frac{1}{4}nk(k-2)(k-3);\\
n_1+n_3+n_5+n_{14}&=\frac{1}{12}nk(\frac{nk}{6}-1)-\frac{1}{8}nk(k-2);\\
3n_1+2n_4+n_6+n_7+2n_9+n_{10}+n_{11}+n_{13}&=\frac{1}{8}nk(k-2)(\frac{nk}{2}-4k+4).
\end{align}
\label{relations}
\end{prop}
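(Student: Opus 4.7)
The plan is to establish each of the seven identities by double-counting a suitably chosen combinatorial ``seed'' configuration. For each equation, the right-hand side is obtained by counting the seed directly, using regularity together with Conditions I and II, the values of $p_3,p_4,p_5$, and the edge--pentagon count from Corollary~\ref{corollary1}. The left-hand side is obtained by classifying each seed occurrence according to which of the fourteen admissible $6$-vertex induced subgraphs of Figure~\ref{fig6} (and Tables 1--2) contains it, weighted by the number of seeds embedded in each subgraph type.

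The direct counts go as follows. For (3) the seed is a non-triangle path of length two $u$--$v$--$w$ (with $u\not\sim w$); regularity and Condition I yield $n\binom{k}{2}-3p_3=\tfrac{1}{2}nk(k-2)$. For (4) the seed is a pentagon together with a marked edge of it, counted via Corollary~\ref{corollary1} as $\tfrac{nk}{2}\cdot 2(k-2)(k-4)=5p_5=nk(k-2)(k-4)$; the sixth vertex of the ambient induced subgraph is the unique triangle-completion of the marked edge, which by Condition I lies outside the pentagon. For (8) the seed is an unordered pair of vertex-disjoint triangles: since every vertex lies in exactly $k/2$ triangles, pairs of triangles sharing a vertex number $n\binom{k/2}{2}=p_4$, so disjoint pairs number $\binom{p_3}{2}-p_4$, which simplifies to $\tfrac{1}{12}nk\bigl(\tfrac{nk}{6}-1\bigr)-\tfrac{1}{8}nk(k-2)$. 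For (9) the seed is a pair (quadrilateral $Q$, edge of $G$ vertex-disjoint from $V(Q)$); since $G[V(Q)]=C_4$ has exactly four edges, the Handshaking Lemma shows that $4k-4$ edges are incident to $V(Q)$, so each quadrilateral sees $|E(G)|-4k+4$ disjoint edges, yielding $p_4\bigl(\tfrac{nk}{2}-4k+4\bigr)=\tfrac{1}{8}nk(k-2)\bigl(\tfrac{nk}{2}-4k+4\bigr)$. For (5)--(7) analogous seeds are used (variants combining triangles, quadrilaterals, and external adjacencies), whose direct counts are immediate consequences of the same ingredients.

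The decomposition step amounts, for each of the seven seeds, to inspecting every one of the fourteen subgraph types in Figure~\ref{fig6} and Tables 1--2 and counting how many copies of the seed are embedded in it. Only the types with nonzero count appear on the left-hand side of the corresponding identity. The coefficients ($6n_1$ in (5); $3n_1$ in (6) and (7); $2n_4$ and $2n_9$ in (9); etc.) come directly from these multiplicities, which reflect the internal symmetries of each subgraph.

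The principal obstacle is this multiplicity bookkeeping. Each individual check is elementary, but one must build a full table of $7\times 14$ entries and verify it by inspection, using Conditions I and II repeatedly to exclude configurations that cannot occur in $G$ (for instance, no two triangles share an edge, and no four vertices induce $K_4$). A secondary subtlety, most visible in (4), is confirming that the ambient $6$-vertex induced subgraph of each seed is uniquely determined --- this is where Condition I enters crucially, since the triangle-completing vertex of the marked edge is unique and, lying outside the pentagon, supplies the sixth vertex without ambiguity. Once the multiplicity table is in place, the seven identities read off immediately.
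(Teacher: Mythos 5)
Your method is the same as the paper's: each identity is a double count of a seed configuration, with the left-hand coefficients given by the number of seeds sitting inside each surviving six-vertex type. Your treatments of (3), (4), (8) and (9) are correct and essentially coincide with the paper's (for (3) the paper parameterizes the seed as a quadrilateral with a chosen pair of adjacent sides, giving $n_2=4p_4$, which is equivalent to your cherry count). The genuine gap is your deferral of (5)--(7) to ``analogous seeds \dots immediate consequences'': for those three identities the entire content is the choice of seed, and none of them is a routine variant of the four you spell out. The seeds actually needed are: for (5), a triangle together with a vertex adjacent to exactly one of its vertices ($3(k-2)$ choices per triangle, hence $6n_1+n_4=3(k-2)p_3=\tfrac12 nk(k-2)$), which moreover requires a canonical completion from four to six vertices via Conditions I and II before any classification step applies; for (6), a quadrilateral with triangles completed on a pair of \emph{opposite} sides, giving $3n_1+n_3=2p_4$; and for (7), an unordered pair of quadrilaterals sharing an edge, giving $3n_1+n_4+n_9=|E(G)|\binom{k-2}{2}$, which rests on the quadrilateral analogue of Corollary~\ref{corollary1} --- every edge of $G$ lies in exactly $k-2$ quadrilaterals --- a fact you never state. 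Without these seeds, the coefficient patterns $6n_1$, $3n_1$, and the joint appearance of $n_4$ and $n_9$ in (7) cannot be read off from any $7\times 14$ multiplicity table, because it is not specified what is being counted.

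A secondary point: your general framing (``classify each seed occurrence according to which of the fourteen six-vertex subgraphs contains it'') is only meaningful for seeds that already span six vertices, as in (8) and (9). For (3), (5) and (6) the seed spans three or four vertices and must first be closed up canonically (second common neighbours from Condition II, triangle apexes from Condition I), and one must check that the closure produces six distinct vertices and no extra induced edges; you acknowledge this only for (4). The checks are easy, but they are exactly where Conditions I and II do the work, so they need to be made for each of the small seeds, not just one.
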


\begin{proof}
Take a quadrilateral from $G$. Complete on its two adjacent sides triangles. We will get the unique graph of type 2 (Figure \ref{fig6}). So using Proposition 3: \[n_2=4p_4=\frac{1}{2}nk(k-2).\]

Take a pentagon from $G$. Complete a triangle on one of its sides. We can get the graph of type 4 or type 8 and no other one. Thus, \[n_4+n_8=5 p_5=nk(k-2)(k-4).\]

Take a triangle in $G$. Choose a vertex adjacent to one of the three vertices of the triangle, out of $3(k-2)$ possible ones. Complete it uniquely the way we did it in Figure 5a. We have two possible configurations: 4 and 1. If we get graph 1, it should be counted six times as it can be obtained in six different ways. \[6n_1+n_4=p_3\cdot 3(k-2)=\frac{1}{2}nk(k-2).\]

Take again a quadrilateral from $G$. Similarly, complete two triangles but this time on its opposite sides. We can get either graph 3 or 1. If we get graph 1, we have to count it three times as it has three distinct quadrilaterals, we could start with. \[3n_1+n_3=2p_4=\frac{1}{4}nk(k-2).\]

All three graphs:1, 4 and 9, consist of two quadrilaterals sharing an edge. Any edge belongs to exactly $k-2$ quadrilaterals, from which we can choose pairs of quadrilaterals. Graph 1 again is counted three times as it has three pairs of such quadrilaterals. So,\[ 3n_1+n_4+n_9=|E(G)| \binom{k-2}{2}=\frac{1}{4}nk(k-2)(k-3).\]

The four graphs: 1, 3, 5, and 14 are all consist of exactly two triangles that do not share a vertex. In order to find all such configurations we simply need to subtract from all possible pairs of triangles those that DO share a vertex. Remember, they can share at most one vertex due to condition 1. Thus, \[n_1+n_3+n_5+n_{14}=\binom{p_3}{2}-n\binom{k/2}{2}= \frac{1}{12}nk(\frac{nk}{6}-1)-\frac{1}{8}nk(k-2).\]

Finally, the last relation bonds the graphs that can be obtained by choosing a quadrilateral and an edge that is not incident to any of the vertices of the quadrilateral. Notice, once a quadrilateral is chosen, the choice of an edge uniquely defines the six-vertex graph. Thus, the coefficients in front of the quantities depend only on number of quadrilaterals the graph has.
\begin{align*}
3n_1+2n_4+n_6+n_7+2n_9+n_{10}+n_{11}+n_{13}&=p_4(|E(G)|-4(k-2)-4)\\ &=\frac{1}{8}nk(k-2)(\frac{nk}{2}-4k+4). \end{align*}

\end{proof}

Now, when we are equipped with all the relations from Proposition \ref{relations}, we can proceed with (2). But first, let us remind ourselves what we are trying to achieve with all these cumbersome calculations. We want to find $n_{12}$ - the number of subgraphs of type 12 in $G$, namely hexagons (Figure \ref{fig6}).

Rewrite (2),
\begin{align*}
&2n_1-n_2+n_3+n_4+2n_5+n_6+n_7-n_8+n_9+n_{10}+n_{11}- n_{12}+n_{13}+2n_{14}\\
&=\frac{1}{2}(c_6+\binom{|E(G)|}{3}-e_4-e_5).
\end{align*}
Subtracting (9) from this expression, we obtain,
\begin{align*}
&-n_1-n_2+n_3 -n_4+2n_5-n_8-n_9- n_{12}+2n_{14}\\
&=\frac{1}{2}(c_6+\binom{|E(G)|}{3}-e_4-e_5)-\frac{1}{8}nk(k-2)(\frac{nk}{2}-4k+4).
\end{align*}
Further subtracting double of (8), we get rid of $n_5$ and $n_{14}$,
\begin{align*}
&-3n_1-n_2-n_3 -n_4-n_8-n_9- n_{12}=\frac{1}{2}(c_6+\binom{|E(G)|}{3}-e_4-e_5) \\
&-\frac{1}{8}nk(k-2)(\frac{nk}{2}-4k+4)-\frac{1}{6}nk(\frac{nk}{6}-1)+\frac{1}{4}nk(k-2).
\end{align*}
The right hand side of the expression is getting horrible and Wolfram Alfa \cite{Wolfram} here is of no help (or we just didn't find the way to use it properly), but we should not worry about it at the moment and rather concentrate on the left-hand side solely. Using (3), (4) and (6), we get
\begin{align*}
&-n_9- n_{12}=\frac{1}{2}(c_6+\binom{|E(G)|}{3}-e_4-e_5) -\frac{1}{8}nk(k-2)(\frac{nk}{2}-4k+4) \\
&-\frac{1}{6}nk(\frac{nk}{6}-1)+\frac{1}{4}nk(k-2)+\frac{3}{4}nk(k-2) +nk(k-2)(k-4).
\end{align*}

Next, we express $n_9$ through needed $n_4$, using (7) and (5).
\[n_9=\frac{1}{4}nk(k-2)(k-3)-\frac{1}{4}nk(k-2)-\frac{n_4}{2}.\]

Substituting,
\begin{align*}
&\frac{n_4}{2}- n_{12}=\frac{1}{2}(c_6+\binom{|E(G)|}{3}-e_4-e_5) -\frac{1}{8}nk(k-2)(\frac{nk}{2}-4k+4) \\
&-\frac{1}{6}nk(\frac{nk}{6}-1)+\frac{3}{4}nk(k-2) +nk(k-2)(k-4) + \frac{1}{4}nk(k-2)(k-3).
\end{align*}
Denoting right hand side by $-F(n,k)$, we have: \[\frac{n_4}{2}- n_{12}=-F(n,k).\]
Notice also that from (5) and (6) $n_4=2n_3$. Thus, \[n_{12}=F(n,k)+n_3.\]
Tedious calculations are required in order to proceed further with the expression on the right hand side. The challenge is the coefficient $c_6$ that is inside $F(n,k)$. It can be easily calculated numerically for a particular value of $n$ and $k$, using the relation  \[c_6=k\sum_{i=0}^{5} \binom{r_1}{5-i}\binom{r_2}{i}\lambda_1^{5-i}\lambda_2^{i}+\sum_{i=0}^{6} \binom{r_1}{6-i}\binom{r_2}{i}\lambda_1^{6-i}\lambda_2^{i}.\]
Here $\lambda_1, \lambda_2$ = eigenvalues of an adjacency matrix $A(G)$, and $r_1,r_2$ their respective multiplicities. In particular, the characteristic polynomial $P_G(x)=(x-k)(x-\lambda_1)^{r_1}(x-\lambda_2)^{r_2}.$

\begin{center}
%\scriptsize
\noindent
\begin{tabular}{ |r|r|r|r|r|}
  \hline
  \multicolumn{3}{|l|}{Table 3: The values of $c_6$ for several orders of $G$} \\
  \hline
 n           &k    & $c_6$                    \\ 
 \hline
 9           &4    &-168                       \\
 99         &14  &-47,288,703            \\
 243       &22  &-2,975,686,065       \\
 6,273    &112 &-7,204,770,339,625,320 \\
 494,019&994 &-2,466,795,174,682,153,663,896,408 \\
 \hline   
\end{tabular}
\end{center}
%\normalsize
The numerical values of $c_6$ for several orders of $G$ are given in the above table. The calculations are done using Julia programming language \cite{Julia}. Further heavily relying on computation machinery of Wolfram Alpha \cite{Wolfram} with the use of the additional relations for eigenvalues and their multiplicities (see West, 2-nd ed., p.466 \cite{West}) such as: 
\begin{align*}
\lambda_1+\lambda_2&=-1;\\
\lambda_1\lambda_2&=-(k-2);\\
r_1+r_2&=n-1;
\end{align*}
we will obtain a formula \[c_6=-\frac{1}{576}nk(k-2)(3k^5+6k^4-84k^3+116k^2+124k-240).\]
Finally, plugging everything back into $F(n,k)$, the expression for $n_{12}$ simplifies to
\[n_{12}=\frac{1}{12}nk(k-2)(2k^2-21k+53)+n_3.\]
By this and $n_3 \geq 0$, we have proven the following statement:

\begin{theorem}
The number of hexagons in $G$ is at least $\frac{1}{12}nk(k-2)(2k^2-21k+53)$.
\end{theorem}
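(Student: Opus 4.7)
The plan is to derive an exact expression of the form $n_{12} = F(n,k) + n_3$, from which the stated lower bound follows immediately since $n_3 \geq 0$ (in fact, $n_3$ counts induced subgraphs of type $3$ from Figure \ref{fig6}, which is clearly nonnegative). The strategy combines two different accountings of the same objects: the algebraic count of six-vertex induced subgraphs packaged inside the coefficient $c_6$ of the characteristic polynomial via identity (\ref{eq1}), and the purely combinatorial count of unordered edge-triples $\binom{|E(G)|}{3}$, split as $e_4 + e_5 + e_6$ where $e_6$ records triples spanning exactly six vertices (three-edge covers).

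First, I would enumerate all connected and disconnected six-vertex graphs (using the Cvetković list and the five disconnected completions in Table 2), and for each record both $\det A(H)$ and the number $\operatorname{cov}(H)$ of three-edge covers. The point is that in the sum $c_6 + \binom{|E(G)|}{3}$, each six-vertex induced subgraph $H$ contributes with weight $\det A(H) + \operatorname{cov}(H)$, so every $H$ for which this sum vanishes drops out. After cancellation, only the twelve graphs in Figure \ref{fig6} plus the contributions $e_4, e_5$ remain, giving equation (\ref{eq2}).

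Next, I would eliminate the unwanted $n_i$'s using the seven combinatorial identities of Proposition \ref{relations}. The order of operations is: subtract (9) to kill $n_6, n_7, n_{10}, n_{11}, n_{13}$; subtract twice (8) to kill $n_5$ and $n_{14}$; apply (3), (4), (6) to eliminate $n_1, n_2, n_3, n_8$; and finally express $n_9$ in terms of $n_4$ using (5) and (7). This leaves a single scalar equation $\tfrac{n_4}{2} - n_{12} = -F(n,k)$, and comparing (5) with (6) gives $n_4 = 2n_3$, hence $n_{12} = F(n,k) + n_3$.

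The main obstacle is the closed-form evaluation of $c_6$ as a polynomial in $n$ and $k$, which is what makes $F(n,k)$ computable. Here I would use the standard symmetric-function expansion of $c_6$ in the nontrivial eigenvalues $\lambda_1, \lambda_2$ of $A(G)$ with multiplicities $r_1, r_2$, together with the SRG spectral relations $\lambda_1 + \lambda_2 = -1$, $\lambda_1 \lambda_2 = -(k-2)$, $r_1 + r_2 = n-1$, and the trace condition $k + r_1\lambda_1 + r_2\lambda_2 = 0$. These allow every elementary symmetric polynomial in the $\lambda_i$'s (weighted by the multiplicities) to be rewritten purely in terms of $n$ and $k$. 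The resulting algebra is long but mechanical, and produces $c_6 = -\tfrac{1}{576}nk(k-2)(3k^5+6k^4-84k^3+116k^2+124k-240)$. Substituting this back into $F(n,k)$, combining with the polynomial contributions from $e_4, e_5, \binom{|E(G)|}{3}$ and the identities of Proposition \ref{relations}, and simplifying, $F(n,k)$ collapses to $\tfrac{1}{12}nk(k-2)(2k^2-21k+53)$, and the theorem follows.
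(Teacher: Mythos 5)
Your proposal follows the paper's proof essentially verbatim: the same cancellation of $c_6+\binom{|E(G)|}{3}$ via the determinant-plus-cover weights, the same elimination order through the identities of Proposition \ref{relations} (subtract (9), then twice (8), then (3), (4), (6), then trade $n_9$ for $n_4$ via (5) and (7)), the same observation $n_4=2n_3$, and the same spectral evaluation of $c_6$. It is correct and takes the same approach as the paper.
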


\section{Conclusion}

In this paper we have studied the structure of a class of strongly regular graphs with parameters $\lambda=1$ and $\mu=2$. We have shown that the lower bound for the number of hexagons in such graphs is $\frac{1}{12}nk(k-2)(2k^2-21k+53)$. We conjecture that the lower bound is indeed the true value for $p_6$ due to many symmetries broken otherwise. This bound is achieved when $n_3=0$, which in turn meaning that two triangles in $G$ connected through two edges are necessarily connected through the third one. Given such condition, Makhnev \cite{Makhnev} has proved that $srg(99,14,1,2)$ doesn't exist.

\begin{conjecture} The number of hexagons in strongly regular graphs with parameters $\lambda=1$ and $\mu=2$ is equal to $\frac{1}{12}nk(k-2)(2k^2-21k+53)$.
\end{conjecture}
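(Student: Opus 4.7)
The Theorem just established gives the identity $n_{12} = F(n,k) + n_3$ with $F(n,k) = \frac{1}{12}nk(k-2)(2k^2-21k+53)$. Since $F(n,k)$ is already the lower bound, the Conjecture is equivalent to the purely structural assertion that $n_3 = 0$ for every strongly regular graph $G$ with $\lambda=1$, $\mu=2$: no six-vertex induced subgraph of $G$ may be of type 3, namely a quadrilateral $abcd$ together with a triangle completer $e$ on side $ab$ and $f$ on the opposite side $cd$, where $ef$ is a non-edge and $e,f$ are not adjacent to the other two quadrilateral vertices.

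My plan is to rule out this configuration by local propagation of Conditions I and II. Assume for contradiction that $G$ contains a type-3 induced subgraph on $\{a,b,c,d,e,f\}$. The edges $ab, cd, ae, be, cf, df$ already have their unique triangle completers inside the subgraph, whereas edges $ad$ and $bc$ each force an external vertex, call them $u$ and $v$; uniqueness of common neighbors pins their intersections with $\{a,b,c,d,e,f\}$ to $\{a,d\}$ and $\{b,c\}$ respectively, and in particular forces $u \neq v$. The non-edges $ef, ec, ed, fa, fb$ each lack one or two of the common neighbors required by $\mu=2$, so each recruits additional external vertices. I would then tabulate which of these new neighbors may coincide with $u$, $v$, or with one another, propagating Conditions I and II on every newly introduced edge and non-edge and checking each resulting branch for consistency.

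The main obstacle, and the reason this is a Conjecture rather than a Theorem, is that the local propagation above almost certainly fails to close within a small neighborhood: every sufficiently small configuration seems to remain consistent, so a contradiction, if one exists, must emerge globally. To force a global contradiction I would seek a second, independent expression for $n_3$ in closed form. One route is further double-counting, combining the relations already established in Proposition \ref{relations} with a new count (for instance, of ordered pairs of vertex-disjoint triangles whose six vertices carry a prescribed number of cross-edges) in order to isolate $n_3$ from the linear system. An alternative is spectral: express $n_3$ as a trace of a polynomial in the adjacency matrix $A$, using the strongly regular identity $A^2 = kI + A + 2(J-I-A)$ to reduce $A^m$ to low-order terms, and evaluate the resulting expression from the known eigenvalues and multiplicities $\lambda_1, \lambda_2, r_1, r_2$.

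Either route can be cross-checked against $srg(9,4,1,2)$ and $srg(243,22,1,2)$, where $n_3 = 0$ is already verified numerically. As a fallback, a computational verification at the remaining feasible parameter sets $(99,14)$, $(6273,112)$, and $(494019,994)$ would strengthen the case, and in the $(99,14)$ case the conclusion $n_3=0$ would, together with Makhnev's cited result, yield a new proof that $srg(99,14,1,2)$ does not exist.
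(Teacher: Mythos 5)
The statement you are addressing is the paper's \emph{Conjecture}: the paper offers no proof of it, only the identity $n_{12}=F(n,k)+n_3$ from the preceding theorem and the remark in the Conclusion that the bound is attained exactly when $n_3=0$. Your reduction of the conjecture to the purely structural assertion $n_3=0$ is correct and coincides with the paper's own reading, and your description of the type-3 configuration and of the forced external vertices $u\neq v$ is accurate. But what you have written is not a proof: the decisive step is explicitly left open (you concede that the local propagation ``almost certainly fails to close''), and the remaining content is a list of candidate strategies, two of which cannot work as described. The spectral route is a dead end: for a strongly regular graph $A^2=kI+\lambda A+\mu(J-I-A)$, so every polynomial in $A$ lies in the span of $I$, $A$, $J$ and every trace $\mathrm{tr}\,q(A)$ is a function of $(n,k,\lambda,\mu)$ alone; all of that information has already been spent in the paper's computation of $c_6$, and $n_3$, being the count of a specific six-vertex \emph{induced} subgraph rather than a closed-walk count, cannot be isolated by any such trace --- any relation you extract from the eigenvalues will be linearly dependent on those already used. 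Likewise, the proposed ``computational verification'' at $(99,14)$, $(6273,112)$ and $(494019,994)$ is vacuous, since no graphs with these parameters are known to exist; the first is Conway's problem itself.

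There is also a structural reason to expect that no routine double-counting or local argument closes the gap. By Makhnev's theorem as cited in the Conclusion, establishing $n_3=0$ for a hypothetical $srg(99,14,1,2)$ already implies that this graph does not exist, so a proof of the conjecture subsumes a solution of Conway's problem; any successful argument must therefore go substantially beyond the linear relations of Proposition~\ref{relations}, which constrain only certain sums involving $n_3$ (e.g.\ $3n_1+n_3=2p_4$ and $n_4=2n_3$) and are consistent with $n_3>0$. As it stands, your proposal correctly identifies the target and its equivalence to $n_3=0$, but it does not prove the statement; it is a reduction together with a research plan whose key step remains open --- exactly the status the paper itself assigns to this claim.
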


Several things worth noticing. First, given the conjecture is true, Makhnev's condition holds not only for a graph with $n=99$ and $k=14$ but for the entire family of strongly regular graphs with $\lambda=1$ and $\mu=2$. This can be observed for the case when $k=4$ or Paley 9. Some preliminary checks show that it holds for another known graph from the family - the Berlekamp–Van Lint–Seidel graph, for $k=22$ \cite{Berlekamp}. Second, all the graphs, except of trivial case $K_3$, must be built of Paley 9 graphs as building blocks if the conjecture is true. In particular, both of the yet unknown graphs for $k=112$ and $k=994$ in that case have more coarse structure. Their $P_9$-built structure might give us an insight on their existence as well.

%\subsection{A subsection}

\end{document}